\def\omathop#1#2#3{\let\temp=#1\def\letter{#2}
  \ifcat#3_ \let\next\@@olim\else\let\next\@olim\fi\next#3}
\def\@olim{\letter\text{-}\!\temp}
\def\@@olim_#1{\mathchoice{
   \setbox0=\hbox{$\displaystyle\letter\text{-}\!\temp\!\text{-}\letter$}
   \setbox2=\hbox{$\displaystyle\temp$}
   \setbox4=\hbox{$\scriptstyle#1$}
   \dimen@=\wd4 \advance\dimen@ by -\wd2 \divide\dimen@ by2
   \def\next{\letter\text{-}\!\temp_{\hbox to 0pt{\hss$\scriptstyle#1$\hss}}
     \hskip\dimen@}
   \ifdim\wd2>\wd4 \def\next{\@olim_{#1}}\fi
   \ifdim\wd4>\wd0 \def\next{\mathop{\llap{$\letter$-}\!\temp}\limits_{#1}}\fi
   \next}
   {\@olim_{#1}}{\@olim_{#1}}{\@olim_{#1}}}
\def\olim{\omathop{\lim}{o}}
\newcommand{\bl}{\left|}                            
\newcommand{\br}{\right|}                           
\newcommand{\be}{\begin{equation}}
\newcommand{\ee}{\end{equation}}
\def\c{\cite }
\def\d{{\,\rm d\,}}
\def\al{\alpha}
\def\phi{\varphi}
\newcommand{\bi}{\begin{itemize}}
\newcommand{\ei}{\end{itemize}}
\newcommand{\bn}{\begin{enumerate}}
\newcommand{\en}{\end{enumerate}}
\def\R{\Bbb{R}}                                     
\def\N{\mathbb{N}}
\def\cA{\mathcal{A}}
\def\cF{\mathcal{F}}
\def\cU{\mathcal{U}}
\def\fF{\mathfrak{F}}
\def\fP{\mathfrak{P}}
\def\eins{\mathbf{1}}
\newcommand{\hide}[1]{}          
\theoremstyle{plain}
\newtheorem{thm}{Theorem}[section]
\newtheorem{prop}[thm]{Proposition}
\newtheorem{cor}[thm]{Corollary}
\newtheorem{example}[thm]{Example}
\theoremstyle{definition}
\newtheorem{definition}[thm]{Definition}
\numberwithin{equation}{section}
\begin{document}
\title[Disjointness and order projections]{Disjointness and order projections in the vector lattices of abstract  Uryson operators}

\author{M.~A.~Pliev and M.~R.~Weber}

\address{South Mathematical Institute of the Russian Academy of Sciences\\
str. Markusa 22 \\
362027 Vladikavkaz,  Russia}
\email{maratpliev@gmail.com}
\address{Technical University Dresden \\
Department of Mathematics, Institut of Analysis \\
01062 Dresden, Germany}
\email{martin.weber@tu-dresden.de}
\keywords{order projections,  orthogonally additive order bounded operators, abstract Uryson operators}
\subjclass[2010]{Primary 47H07; Secondary 47H99.}

\begin{abstract}
Projections onto several special subsets 
in the Dedekind complete vector lattice of orthogonally additive, order bounded
(called abstract Uryson) operators between two vector lattices $E$ and $F$ are considered 
and some new formulas are provided. 
\end{abstract}

\maketitle
%
%
\section{Introduction}
%
%
The study of nonlinear maps between vector lattices is a growing area and a subject of intensive investigations 
\cite{BAP,Gum,PP}, where the background has to be found in the nonlinear integral operators, see e.g. \cite{KZPS}.
The interesting class $\mathcal{U}(E,F)$ of nonlinear, order bounded, orthogonally additive operators, 
the so called abstract Uryson operators from a vector lattice $E$ into a  vector lattice $F$ 
was introduced and studied in 1990 by Maz\'{o}n and Segura de Le\'{o}n \cite{Maz-1,Maz-2}, and then
considered to be defined on lattice-normed spaces by Kusraev and Pliev in \cite{Ku-1,Ku-2,Pl-3}. 
If $F$ is Dedekind complete then $\mathcal{U}(E,F)$ turns out to be a Dedekind vector lattice and, so the 
band or order projections are of great interest as a tool for further investigation.    
\par
In this paper some new formulas for projections in $\mathcal{U}(E,F)$ are obtained which are of interest on their own. 
In a forthcoming paper these formulas play an important role in the investigation of finite elements 
in $\mathcal{U}(E,F)$.\footnote{\,The first named author was supported by the Russian Foundation of Fundamental Research, 
the grant number 14-01-91339.} 
%
%
\section{Preliminaries}
%
%
The  goal of this section is to introduce some basic definitions and facts. General information on vector lattices
the reader can find in the books \cite{Al,Ku, Web14, Za}.
\par
Recall that an element $z$ in a vector lattice $E$ is said to be a {\it component} or a
{\it fragment} of $x$ if $z\bot(x-z)$.
The notations $x=y\sqcup z$ and $z\sqsubseteq x$ mean that $x=y+z$ with $y\bot z$ and that $z$ is a fragment
of $x$, respectively.
The set of all fragments of the element $x\in E$ is denoted by  $\cF_x$.
Let be $x\in E$. A collection $(\rho_\xi)_{\xi\in \Xi}$ of elements in $E$ is called a {\it partition of} $x$ if
$\bl\rho_\xi\br\wedge \bl\rho_\eta\br=0$, whenever $\xi\neq \eta$ and $x=\sum_{\xi\in \Xi}\rho_\xi$.
\begin{definition} \label{def:ddmjf0}
Let $E$ be a vector lattice and let $X$ be a real vector space.
An operator $T\colon E\rightarrow X$ is called \textit{orthogonally additive} if $T(x+y)=T(x)+T(y)$
whenever $x,y\in E$ are disjoint elements, i.e. $\bl x\br\wedge \bl y\br =0$.
\end{definition}
It follows from the definition that $T(0)=0$.
It is immediate that the set of all orthogonally additive operators
is a real vector space with respect to the natural linear operations.
\par
So, the orthogonal additivity of an operator $T$ will be expressed as \linebreak $T(x\sqcup y)=T(x)+T(y)$.
\begin{definition}
Let $E$ and $F$ be vector lattices. An orthogonally additive operator $T\colon E\rightarrow F$ is called:
\begin{itemize}
  \item \textit{positive} if $Tx \geq 0$ holds in $F$ for all $x \in E$,
  \item \textit{order bounded} if $T$ maps order bounded sets in $E$ to order bounded sets in $F$.
\end{itemize}
An orthogonally additive order bounded operator $T\colon E\rightarrow F$ is called an \textit{abstract Uryson} operator.
\end{definition}
The set of all abstract Uryson operators from $E$ to $F$ we denote by $\cU(E,F)$.
 If $F=\R$ then an element $f\in \cU(E,\R)$ is called an {\it abstract Uryson functional}.
\par
A positive linear order bounded operator $A\colon E\to F$ defines a positive abstract Uryson operator by means
of $T(x)=A(\bl x\br)$ for each $x\in E$.
\par
 We will consider an important example. 
The most famous examples are the nonlinear integral Uryson operators which are well known and thoroughly studied e.g.
in \cite{KZPS}, chapt.5.
\par\smallskip
Let $(A,\Sigma,\mu)$ and $(B,\Xi,\nu)$ be $\sigma$-finite complete measure spaces, and let $(A\times B,\mu\times\nu)$ denote the completion of their product measure space. 
Let $K:A\times B\times\Bbb{R}\rightarrow\Bbb{R}$ be a function satisfying the following conditions\footnote{\,$(C_{1})$ and $(C_{2})$ are called the Carath\'{e}odory 
conditions.}:
\begin{enumerate}
  \item[$(C_{0})$] $K(s,t,0)=0$ for $\mu\times\nu$-almost all $(s,t)\in  A\times B$;
  \item[$(C_{1})$] $K(\cdot,\cdot,r)$ is $\mu\times\nu$-measurable for all $r\in\Bbb{R}$;
  \item[$(C_{2})$] $K(s,t,\cdot)$ is continuous on $\Bbb{R}$ for $\mu\times\nu$-almost all $(s,t)\in A\times B$.
\end{enumerate}
Denote by $L_0(A,\Sigma,\mu)$ or, shortly by $L_0(\mu)$, the ordered space of all $\mu$-measurable and $\mu$-almost 
everywhere finite functions on A with the order $f\leq g$ defined as $f(s)\leq g(s) \,\mu$-almost everywhere on $A$.
Then $L_0(\mu)$ is a vector lattice.
Analogously the space $L_0(B,\Xi,\nu)$, or shortly $L_0(\nu)$, is defined.
\begin{example}\label{Ex-0}
Given $f\in L_{0}(A,\Sigma,\mu)$ the function $|K(s,\cdot,f(\cdot))|$ is $\mu$-mea\-surable  for $\nu$-almost
all $s\in B$ and $h_{f}(s):=\int_{A}|K(s,t,f(t))|\,d\mu(t)$ is a well defined $\nu$-measurable function.
Since the function $h_{f}$ can be infinite on a set of positive measure, we define
\[
\text{Dom}_{A}(K):=\{f\in L_{0}(\mu)\colon \,h_{f}\in L_{0}(\nu)\}.
\]
Define an operator $T\colon\text{Dom}_{A}(K)\rightarrow L_{0}(\nu)$ by setting
\begin{equation}\label{U}
(Tf)(s):=\int_{A}K(s,t,f(t))\,\d\mu(t)\quad\nu\text{-a.e.}  
\end{equation}
Let $E$ and $F$ be order ideals in $L_{0}(\mu)$ and $L_{0}(\nu)$, respectively and $K$ a function
satisfying the conditions $(C_{0})$\,-\,$(C_{2})$.
Then (\ref{U}) defines an orthogonally additive, in general, not order bounded integral operator acting from
$E$ to $F$ if $E\subseteq \text{Dom}_{A}(K)$ and $T(E)\subseteq F$. The operator $T$ is called
{\rm Uryson (integral) operator}.
\end{example}
\hide{
We consider the vector space $\R^m$ for $m \in \N$ as a vector lattice with the usual coordinate-wise order:
for any $x,y \in \R^m$ we set $x \leq y$ provided $e_i^*(x) \leq e_i^*(y)$ for all $i = 1, \ldots, m$,
where $(e_i^*)_{i=1}^m$ are the coordinate functionals on $\R^m$.
\begin{example} \label{Ex-1}
A map $T\colon \R^n\rightarrow\Bbb{R}^{m}$ belongs to $\mathcal{U}(\Bbb{R}^{n},\Bbb{R}^{m})$ if and only if there are real
functions $T_{i,j}:\R\rightarrow\R$,
$1\leq i\leq m$, $1\leq j\leq n$ satisfying the condition $T_{i,j}(0)=0$ and such that $T_{i,j}([a,b])$ is (order) bounded in $\R^m$
for each order interval $[a,b]\subset \R^n$, where the $i$-th component of the vector $T(x)$ is
calculated by the usual matrix rule, i.e.
$$
\left(T(x)\right)_i = e_i^*\bigl(T(x_{1},\dots,x_{n})\bigr) = \sum_{j=1}^{n}T_{i,j}(x_{j}), \quad i=1,\ldots,m
$$
In this case we write $T=(T_{i,j})$.
\end{example}
}
For more examples of abstract Uryson operators see \c{PP}.
\par
\medskip
In $\cU(E,F)$ the order is introduced as follows:  $S\leq T$ whenever $T-S$ is a positive operator.
Then $\cU(E,F)$ becomes an ordered vector space.
If the vector lattice $F$ is Dedekind complete then $\cU(E,F)$ is a Dedekind complete vector lattice and 
the following generalizations of the well known Riesz-Kantorovich 
formulas for linear regular operators hold (see \cite{Al}, Theorems 1.13 and 1.16).
\begin{thm}[\cite{Maz-1}, Theorem~3.2.] \label{f}
Let $E$ and $F$ be vector lattices with $F$ Dedekind complete. 
Then $\mathcal{U}(E,F)$ is a Dedekind complete vector lattice. 
Moreover, for each $S,T\in \mathcal{U}(E,F)$ and $x\in E$ the following conditions hold
\begin{enumerate}
  \item $(T\vee S)(x)=\sup\{T(y)+S(z):\, x = y \sqcup z\}$.
  \item $(T\wedge S)(x)=\inf\{T(y)+S(z):\, x = y \sqcup z\}$.
  \item $T^{+}(x)=\sup\{Ty:\, y \, \sqsubseteq x\}$.
  \item $T^{-}(x)=-\inf\{Ty: \, y \, \sqsubseteq x\}$.
  \item $\bl T \br(x)=\left(T^+\vee T^-\right)(x) = \sup \{T(y)-T(z): x=y\sqcup z\}$
  \item $|T(x)|\leq|T|(x)$.
  \end{enumerate}
\end{thm}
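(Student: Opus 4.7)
The plan is to imitate the classical Riesz--Kantorovich construction, with arbitrary decompositions of positive elements replaced by disjoint fragment decompositions $x=y\sqcup z$. For fixed $x\in E$ and $T,S\in\cU(E,F)$, every such decomposition satisfies $y,z\in\cF_x$ and hence $\bl y\br,\bl z\br\leq\bl x\br$. Order boundedness of $T$ and $S$ then places the set
\[
A_{T,S}(x):=\{T(y)+S(z):\,x=y\sqcup z\}
\]
inside a fixed order interval of $F$, and by Dedekind completeness of $F$ it admits a supremum and an infimum. I would define $R(x):=\sup A_{T,S}(x)$ and aim to show $R\in\cU(E,F)$ and $R=T\vee S$.

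The key step is orthogonal additivity of $R$, which I would prove via the fragment-splitting lemma: whenever $u_1,u_2\in E$ are disjoint, every fragment $y\sqsubseteq u_1\sqcup u_2$ has a unique representation $y=y_1\sqcup y_2$ with $y_i\sqsubseteq u_i$. Applied to both $y$ and $z$ in a decomposition $u_1\sqcup u_2=y\sqcup z$, together with the orthogonal additivity of $T$ and $S$, this yields that $A_{T,S}(u_1\sqcup u_2)$ coincides as a set with the Minkowski sum $A_{T,S}(u_1)+A_{T,S}(u_2)$. Passing to suprema (which commute with sums of order bounded sets in a Dedekind complete lattice) gives $R(u_1\sqcup u_2)=R(u_1)+R(u_2)$, and order boundedness of $R$ transfers from that of $T$ and $S$ through the fragment bound $\bl y\br\leq\bl x\br$. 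For $R=T\vee S$, the decompositions $(y,z)=(x,0)$ and $(0,x)$ give $R\geq T$ and $R\geq S$, while any $W\in\cU(E,F)$ dominating both $T$ and $S$ satisfies $W(x)=W(y)+W(z)\geq T(y)+S(z)$ for every $x=y\sqcup z$ by its own orthogonal additivity, so $W\geq R$. This establishes (1).

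Formula (2) follows by applying (1) to $-T,-S$ and using $T\wedge S=-((-T)\vee(-S))$. Formulas (3) and (4) are the special cases $S=0$ of (1) and (2). For (5), I would apply (1) with $S=-T$ to obtain $\bl T\br(x)=(T\vee(-T))(x)=\sup\{T(y)-T(z):\,x=y\sqcup z\}$. Inequality (6) is then immediate from the two extreme decompositions $(y,z)=(x,0)$ and $(0,x)$, which place both $T(x)$ and $-T(x)$ in the defining set of $\bl T\br(x)$; hence $\bl T(x)\br=T(x)\vee(-T(x))\leq\bl T\br(x)$.

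For Dedekind completeness of $\cU(E,F)$, I would take an order bounded set of operators, pass to the upward directed enlargement built from finite suprema (available once (1) is established), and thereby reduce to an increasing net $(T_\alpha)$ with both a lower and an upper bound. The pointwise supremum $T(x):=\sup_\alpha T_\alpha(x)$ is well defined in $F$; orthogonal additivity transfers to $T$ because along a directed increasing net the pointwise supremum commutes with finite sums, and order boundedness of $T$ follows from the uniform bounds. The main obstacle in the whole argument is the fragment-splitting lemma used in the second paragraph: since $E$ is not assumed to be Dedekind complete or to have band projections, the decomposition $y=y_1\sqcup y_2$ of fragments of $u_1\sqcup u_2$ must be extracted purely from the identity $\bl u_1\sqcup u_2\br=\bl u_1\br+\bl u_2\br$ and the defining relation $y\bot(x-y)$, rather than from projection operators unavailable in general $E$. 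Everything else is then bookkeeping modelled on the classical linear Riesz--Kantorovich proof.
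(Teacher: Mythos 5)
The paper does not prove this theorem at all --- it is quoted verbatim from Maz\'on and Segura de Le\'on (\cite{Maz-1}, Theorem~3.2) --- so there is no internal proof to compare against; your reconstruction follows the same Riesz--Kantorovich-style route as that source and is correct in all its steps. The one point you flag as the main obstacle, namely that every fragment $y$ of $u_1\sqcup u_2$ splits uniquely as $y=y_1\sqcup y_2$ with $y_i\sqsubseteq u_i$, does hold in an arbitrary vector lattice without any appeal to band projections: from $|y|\leq|u_1|+|u_2|$ the Riesz decomposition property gives $|y|=(|y|\wedge|u_1|)+(|y|\wedge|u_2|)$ with disjoint summands $a_i=|y|\wedge|u_i|$, and decomposing $y^{+}$ and $y^{-}$ against $a_1+a_2$ in the same way and setting $y_i=(y^{+}\wedge a_i)-(y^{-}\wedge a_i)$ yields the required fragments, uniqueness following because the two summands lie in the mutually disjoint ideals generated by $|u_1|$ and $|u_2|$. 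With that lemma secured, the Minkowski-sum identity for the defining sets, the additivity of suprema, the duality reductions for items (2)--(6), and the pointwise-supremum argument for Dedekind completeness all go through as you describe.
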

%
%
\section{Disjoint vectors and order projections in $\mathcal{U}(E,F)$}
%
%
Order projections are an important tool in the study of vector lattices.
There are  some interesting results concerning order projections in the spaces of linear, bilinear  
and orthogonally additive operators in vector lattices
\cite{Al-1,Ko,Ko-1,Pag,Pl-3, Pl-4,Pl-5}. 
The peculiarities of the elements in the Dedekind complete vector lattice of abstract Uryson operators originate further 
 new projection formulas, which encourage, in particular, the study of finite elements (\c{Web14}) in such vector lattices.   
 Recall first some basic notions.
Each band $K$ in a Dedekind complete vector lattice $F$ generates
an order (or band) projection $\rho_K\colon F\to K$ defined for $f\in F$ by $\rho_K(f)=f_1$ if $f=f_1+f_2$
with $f_1\in K$ and  $f_2\in K^\perp$, where $f_1$ and $f_2$ are called the projections of the element $f$ onto
the bands $K$ and $K^\bot$, respectively. 
Denote by $\rho_f$ the band projection onto the principle band $\{f\}^{\bot\bot}$ and
by $\rho_f^\perp$ the corresponding band projection onto $\{f\}^\perp$ in $F$. It is clear that $\rho_f^{\bot\bot}=\rho_f$.
The  following formulas for calculation of the projections (of an element $0\leq g\in F$) onto a band $K$  and
onto a principle band $\{f\}^{\bot\bot}$ are well known and used several times:
\[ \rho_K(g)= \sup\{y\in K\colon 0\leq y\leq g\}\quad\mbox{and} \quad
   \rho_f(g)= \sup\limits_{n\in \N}\{g\wedge (nf)\}.  \]
The set of all band projections (shortly, projectors) in $F$ is denoted by $\fP(F)$.
Under the order $\rho'\leq \rho''$ iff $\rho'\circ \rho''=\rho'$ and the Boolean operations
\[ \rho'\wedge\rho''= \rho'\circ\rho'', \quad \rho'\vee\rho'' = \rho'+\rho''-\rho'\circ\rho'', \quad \rho^*= I_F-\rho,
   \]
where $\rho,\rho',\rho''\in \fP(F)$ and $I_F$ is the identity operator on $F$, the set 
$\fP(F)$ turns out to be a Boolean algebra, i.e. a distributive complemented lattice with zero $0$  
and unity $I_F$ (see \c{Ku}, sect.1.3.5). It is clear that $\rho\circ\rho'=\rho'\circ\rho$ and 
$\rho\circ\rho=\rho$ for any $\rho,\,\rho'\in \fP(F)$.
The following relations among projections onto principal bands  
will be frequently used later on: 
$$ \rho_f\wedge\rho_g= \rho_{f\wedge g}, 
\;\; \rho_f\circ \rho_f^\bot =0, \;\; \rho_g g=g,\;\; \rho_f(f\wedge g)=f\wedge g,\;\; \rho_f(v)+\rho_f^\bot v=v.$$ 
As a representative sample we show the first of them. Take $v\in F$. Then  
\begin{eqnarray*}(\rho_f\wedge \rho_g)v & = & (\rho_f\circ \rho_g)v=\rho_f(\rho_g v)= 
 \rho_f\big(\sup\limits_{n\in \N}\{v\wedge ng\}\big) \\ 
& = & \sup\limits_{m\in \N}\big\{\sup\limits_{n\in \N}\{v\wedge ng\}\wedge mf\}\big\}=
      \sup\limits_m\,\sup\limits_n\{v\wedge \inf\{m,n\}g\wedge f \big\}\\ 
& = & \sup\limits_{k\in \N}\{v\wedge k(f\wedge g)\}=\rho_{f\wedge g}(v).
\end{eqnarray*}  
A {\it partition of unity} is a family of projectors $(\rho_\xi)_{\xi\in \Xi}\subset\fP(F)$ such that
$\rho_\xi\wedge\rho_\eta=0$ for $\xi\neq \eta$ and $\sup\limits_{\xi\in \Xi}\rho_\xi=I_F$.
\par
For proving the subsequent theorem we need the following auxiliary proposition, which was proven by the nonstandard methods.
\begin{prop}[\cite{Ku-Kut}, Proposition~5.2.7.2]\label{op-100}
Let $F$ be a  Dedekind complete vector lattice with a weak order unit\footnote{\,An element $u\in F_+$ is a 
{\it weak order unit} if $\{u\}^{\bot\bot}=F$, i.e. except $0$ there
are no elements in $F$ which are disjoint to $u$.} $u$ and $(x_{\lambda})_{\lambda\in\Lambda}$ 
be an order bounded net in $F$. Then the net  $(x_{\lambda})_{\lambda\in\Lambda}$ order converges to an element $x\in F$ 
if and only if for every $\varepsilon>0$ there exists a partition of unity 
$(\rho_{\lambda})_{\lambda\in\Lambda}$ such that
\[
\rho_{\lambda}|x_{\beta}-x|\leq\varepsilon u, \;\; \beta\geq\lambda.
\]
\end{prop}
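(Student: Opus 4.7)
The plan is to prove the two implications separately: the converse boils down to a short computation once the partition is in hand, while the forward direction requires a genuine construction. Throughout I will exploit that $F$ is Archimedean (being Dedekind complete) and that band projections on $F$ are order continuous.

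For the converse, set $z_{\la}:=\sup_{\beta\geq\la}|x_{\beta}-x|$, which exists by Dedekind completeness and order boundedness and is manifestly decreasing in $\la$. Fix $\eps>0$ and let $(\rho_{\la})_{\la\in\Lambda}$ be the partition of unity supplied by the hypothesis. Order continuity of $\rho_{\la}$ gives $\rho_{\la}z_{\la}=\sup_{\beta\geq\la}\rho_{\la}|x_{\beta}-x|\leq\eps u$. Put $w:=\inf_{\la}z_{\la}\geq 0$, so $\rho_{\la}w\leq\rho_{\la}z_{\la}\leq\eps u$ for every $\la$. Because $(\rho_{\la})$ is a partition of unity and the elements $\rho_{\la}w$ are pairwise disjoint fragments of $w$, one has $w=\sup_{\la}\rho_{\la}w\leq\eps u$. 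Arbitrariness of $\eps$ together with the Archimedean property forces $w=0$, so $z_{\la}\downarrow 0$, which is precisely $x_{\la}\to x$ in order.

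For the forward direction, set $y_{\la}:=\sup_{\beta\geq\la}|x_{\beta}-x|$, so that $y_{\la}\downarrow 0$ and $|x_{\beta}-x|\leq y_{\la}$ whenever $\beta\geq\la$. Fix $\eps>0$ and let $\sigma_{\la}\in\fP(F)$ denote the band projection onto the principal band generated by $(\eps u-y_{\la})^{+}$. I will verify three facts. (i) $\sigma_{\la}y_{\la}\leq\eps u$: the decomposition $y_{\la}=(y_{\la}\wedge\eps u)+(y_{\la}-\eps u)^{+}$ combined with the disjointness $(\eps u-y_{\la})^{+}\wedge(y_{\la}-\eps u)^{+}=0$ forces $\sigma_{\la}(y_{\la}-\eps u)^{+}=0$, whence $\sigma_{\la}y_{\la}=\sigma_{\la}(y_{\la}\wedge\eps u)\leq\sigma_{\la}(\eps u)\leq\eps u$. (ii) The family $(\sigma_{\la})$ is increasing, since $(\eps u-y_{\la})^{+}$ is increasing in $\la$ and the generated principal bands therefore increase. (iii) $\sup_{\la}\sigma_{\la}=I_{F}$: from $y_{\la}\downarrow 0$ and order continuity of the lattice operations, $\inf_{\la}(\eps u\wedge y_{\la})=\eps u\wedge 0=0$, hence $\sup_{\la}(\eps u-y_{\la})^{+}=\eps u$, and $\eps u$ being a weak order unit forces the corresponding principal bands to exhaust $F$. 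Combining (i)--(iii), $\sigma_{\la}|x_{\beta}-x|\leq\sigma_{\la}y_{\la}\leq\eps u$ for every $\beta\geq\la$.

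The final step converts the increasing family $(\sigma_{\la})$ into an honest partition of unity indexed by $\Lambda$. I will fix any well-ordering $\prec$ on $\Lambda$ and define $\rho_{\la}:=\sigma_{\la}\wedge\bigl(\sup_{\mu\prec\la}\sigma_{\mu}\bigr)^{*}$ inside the Boolean algebra $\fP(F)$. A transfinite induction using the Boolean identity $(a\wedge b^{*})\vee b=a\vee b$ (valid for $a\geq b$) shows $\sup_{\mu\preceq\la}\rho_{\mu}=\sigma_{\la}$, so the $\rho_{\la}$ are pairwise disjoint and $\sup_{\la}\rho_{\la}=I_{F}$. Since $\rho_{\la}\leq\sigma_{\la}$, the bound $\rho_{\la}|x_{\beta}-x|\leq\eps u$ persists for $\beta\geq\la$ in the original directed ordering of $\Lambda$. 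The main technical hurdle is this disjointification together with the identity $\sup_{\la}(\eps u-y_{\la})^{+}=\eps u$; the latter is precisely where the assumption that $u$ is a weak order unit plays its decisive role.
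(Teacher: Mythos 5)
Your argument is correct, and it is genuinely different from what the paper does: the paper gives no proof at all, citing Kusraev--Kutateladze, Proposition 5.2.7.2, and noting that the result there is obtained ``by nonstandard methods'' (Boolean valued analysis, via transfer of the scalar statement that order convergence equals relative uniform convergence inside the Boolean valued model). You instead give a direct, elementary argument entirely inside $F$: the sufficiency via $w=\sup_{\lambda}\rho_{\lambda}w\leq\varepsilon u$ for $w=\inf_{\lambda}\sup_{\beta\geq\lambda}|x_{\beta}-x|$, and the necessity via the projections $\sigma_{\lambda}$ onto the bands generated by $(\varepsilon u-y_{\lambda})^{+}$, which is exactly where the weak order unit enters through $\sup_{\lambda}(\varepsilon u-y_{\lambda})^{+}=\varepsilon u$. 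This buys a self-contained standard proof at the cost of the transfinite disjointification at the end; the Boolean valued route hides that bookkeeping in the machinery. One small correction to your last step: the identity $\sup_{\mu\preceq\lambda}\rho_{\mu}=\sigma_{\lambda}$ holds only if the chosen well-ordering $\prec$ refines the directed order of $\Lambda$, which a generic well-ordering does not; the correct general statement of the disjointification is $\sup_{\mu\preceq\lambda}\rho_{\mu}=\sup_{\mu\preceq\lambda}\sigma_{\mu}$. This still yields everything you actually use --- pairwise disjointness of the $\rho_{\lambda}$, the bound $\rho_{\lambda}\leq\sigma_{\lambda}$, and $\sup_{\lambda}\rho_{\lambda}=\sup_{\lambda}\sigma_{\lambda}=I_{F}$ --- so the proof stands as written once that one line is repaired.
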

\par
\begin{thm}\label{op-10}
Let $E,F$ be vector lattices,  $F$ be  Dedekind complete and let $\cA$ be the set of all weak order
units in $F$.
If the operators $T,S\in\cU_{+}(E,F)$  are disjoint, then for every  $x\in E$, $u\in \cA$ and $\varepsilon>0$
there exists a partition of unity $(\pi_{\alpha})_{\alpha\in\Delta}$ in $\fP(F)$ and a family
$(x_\al)_{\al\in\Delta}$ of fragments of $x$, such that
\[
\pi_\al\big(Tx_\al+S(x-x_\al)\big)\leq\varepsilon u \;\mbox{ for all } \; \alpha\in\Delta.
\]
\end{thm}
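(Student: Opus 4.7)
My plan is to reduce the statement to Proposition~\ref{op-100} by producing an order-convergent net of fragments and then upgrading the resulting partition of unity to one indexed by single fragments. The starting point is that $T \wedge S = 0$ together with Theorem~\ref{f}(2) gives, at the fixed $x \in E$,
\[
\inf\{Ty + S(x-y) : y \sqsubseteq x\} = (T \wedge S)(x) = 0.
\]
To turn this into a genuinely decreasing net, I would index over the directed set $\mathfrak{N}$ of finite subsets of $\mathcal{F}_x$ ordered by inclusion and consider
\[
f_N := \inf_{y \in N}\bigl(Ty + S(x-y)\bigr) \in F_+, \quad N \in \mathfrak{N}.
\]
This family is downward directed with $\inf_N f_N = 0$, so it converges in order to $0$. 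Proposition~\ref{op-100} then supplies, for the given $\varepsilon > 0$ and $u \in \mathcal{A}$, a partition of unity $(\rho_N)_{N \in \mathfrak{N}} \subset \mathfrak{P}(F)$ with $\rho_N f_N \leq \varepsilon u$ for every $N$.

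The step I expect to be the main obstacle is refining each $\rho_N$ so that a single summand $Ty + S(x-y)$ replaces the infimum $f_N$. For this I would establish, by induction on $|N|$, the following auxiliary claim: any finite family $b_1, \ldots, b_n \in F_+$ with meet $c$ admits a partition of unity $(\sigma_i)_{i=1}^n$ in $\mathfrak{P}(F)$ with $\sigma_i b_i \leq c$ for every $i$. The base case rests on the disjointness of $b_1 - c$ and $b_2 - c$: taking $\tau := \rho_{b_1 - c}$ one verifies $\tau^\perp b_1 = \tau^\perp c \leq c$ and $\tau b_2 = \tau c \leq c$; the inductive step follows by applying the two-element case to $b_1 \wedge \cdots \wedge b_{n-1}$ and $b_n$ and then invoking the inductive hypothesis inside the resulting complementary band. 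Applied to $b_y := Ty + S(x-y)$ with $y \in N$, this produces a partition $(\sigma_N^y)_{y \in N}$ of $I_F$ with $\sigma_N^y\bigl(Ty + S(x-y)\bigr) \leq f_N$.

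Assembling everything, the doubly indexed family $\pi_{(N,y)} := \rho_N \circ \sigma_N^y$ with $x_{(N,y)} := y$, indexed by $\Delta := \{(N, y) : N \in \mathfrak{N},\, y \in N\}$, is a partition of unity in $\mathfrak{P}(F)$ because the $\rho_N$ form a partition of unity and, for each $N$, the $\sigma_N^y$ do so within $I_F$. Combining the two estimates yields
\[
\pi_{(N,y)}\bigl(Tx_{(N,y)} + S(x - x_{(N,y)})\bigr) = \rho_N \sigma_N^y\bigl(Ty + S(x-y)\bigr) \leq \rho_N f_N \leq \varepsilon u,
\]
which is the desired bound on every piece of the partition.
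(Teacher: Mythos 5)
Your proof is correct, and while its first half coincides with the paper's (both pass from $(T\wedge S)(x)=0$ to the downward directed net of finite infima and invoke Proposition~\ref{op-100} to get a partition of unity $(\rho_N)$ with $\rho_N f_N\leq\varepsilon u$), the crucial refinement step is handled by a genuinely different and arguably cleaner device. The paper identifies $F$ with a sublattice of $C_{\infty}(Q)$ for an extremally disconnected compactum $Q$, realizes the $\rho_N$ as multiplications by characteristic functions of clopen sets, and carves each $Q_N$ into the closures of the open sets where a single summand $Ty+S(x-y)$ is the strict minimum; it then has to argue separately (by contradiction with the partition-of-unity property) that these pieces exhaust everything. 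Your auxiliary lemma --- that positive elements $b_1,\dots,b_n$ with meet $c$ admit a partition of unity $(\sigma_i)$ with $\sigma_ib_i\leq c$, proved by induction from the disjointness of $b_1-c$ and $b_2-c$ via $\tau=\rho_{b_1-c}$ --- accomplishes the same splitting entirely inside the Boolean algebra $\mathfrak{P}(F)$, with no representation theorem and no point-set topology; the exhaustion of $I_F$ is automatic because each $(\sigma_N^y)_{y\in N}$ is by construction a finite partition of unity and finite distributivity gives $\sup_{y\in N}\rho_N\wedge\sigma_N^y=\rho_N$. The only cosmetic difference is that your partition is indexed by pairs $(N,y)$ with $y\in N$ rather than by complementary pairs of fragments, which is immaterial since the theorem allows repetitions in the family $(x_\al)$; do note in passing that the net $(f_N)$ is order bounded (by $Tx+Sx$, using Theorem~\ref{f}(3)), as Proposition~\ref{op-100} requires.
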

\begin{proof}
Take $x\in E$. Denote the set of all pairs $\al=(y,z)\in\cF_x\times\cF_x$ of
mutually disjoint fragments of $x$, such that $y+z=x$,  by $\Delta$.
For any $\al=(y,x-y)\in \Delta$ put $f_\al=Ty+S(x-y)$. 
Due to formula (2) of Theorem \ref{f} the disjointness
of the operators $S$ and $T$ implies $\inf\limits_{\alpha\in\Delta}\{f_{\alpha}\}=0$.
Denote by $\Xi$ the collection of all finite subsets of $\Delta$ ordered as usual by inclusion, 
i.e. $\xi\leq \xi'$ iff $\xi\subset \xi'$. 
Introduce the set $(y_{\xi})_{\xi\in\Xi}$ of all infima of finite many elements of the set
$\{f_\al\colon \al\in \Delta\}$, i.e. if $\xi\in\Xi$ is the finite set  $\xi=\{\al_{\xi_{1}},\dots,\al_{\xi_{n}}\}$,
where $\al_{\xi_k}\in \Delta$ for $k=1,\ldots,n$, then
\[ 
       y_\xi=\bigwedge\limits_{i=1}^n f_{\al_{\xi_i}}
\]
The set $(y_{\xi})_{\xi\in\Xi}$ is downwards directed and $\olim\limits_{\xi\in\Xi}y_{\xi}=0$.
By Proposition~\ref{op-100}, for every $\varepsilon>0$ and $u\in\cA$
there exists a partition of unity $(\rho_{\xi})_{\xi\in\Xi}$ in $\fP(F)$ such that
\[
\rho_{\xi}(y_{\xi})\leq \varepsilon u \;\mbox{ for all }\;  \xi\in\Xi.
\]
In particular, $\rho_\xi(f_\al)<\varepsilon u$ if $\xi=\al$. 
\par
Identify now $F$ with a vector sublattice of the Dedekind complete vector lattice
$C_{\infty}(Q)$ of all extended real valued continuous functions on some extremally disconnected compact 
space $Q$ (more exactly with its image under some vector lattice isomorphism), where the choosen weak order 
unit $u$ is mapped onto the constant function  $\eins$ on $Q$ (see \c{AbrAl}, Theorem 3.35).  
Then the order projections $(\rho_{\xi})_{\xi\in\Xi}$  
(of the above partition of unity) are the multiplication operators in the space $C_{\infty}(Q)$ 
generated by the characteristic functions $\eins_{Q_{\xi}}$, respectively, 
where $Q_{\xi}$ for all $\xi\in\Xi$ are closed-open subsets of $Q$ 
such that $Q=\bigcup\limits_{\xi}Q_{\xi}$ and $Q_{\xi}\cap Q_{\xi'}=\emptyset$ 
for every $\xi,\xi'\in \Xi$, $\xi\neq\xi'$. The supremum $\sup\limits_{\xi\in \Xi} \rho_\xi$ is 
the identity operator $I_F$. 
\\
For $\xi\in\Xi$ and $\al\in \Delta$ define the set  
\[
     A_\al^\xi=\{t\in Q_\xi\colon f_\al(t)< f_\beta(t), \, \beta\in \xi,\, \beta\neq \al \} 
\]
and denote by $\overline{A_\al^\xi}$ its closure in $Q_\xi$ and, consequently in $Q$. 
So $\overline{A_\al^\xi}$  are closed-open subsets of $Q$
for every  $\xi\in\Xi$ and $\al\in \Delta$ and, mutually disjoint if at least one index is different $\al\neq \al'$ 
or $\xi\neq \xi'$. 
Denote by $\rho_\al^\xi$ the multiplication operator
generated by the characteristic function $\eins_{\overline{A_{\alpha}^{\xi}}}$,
i.e.  $\rho_\al^\xi(f)= f\cdot\eins_{\overline{A_\al^\xi}}$ for any function $f\in C_{\infty}(Q)$.
It is clear that $\rho_\al^\xi$ is an order projection in $C_{\infty}(Q)$ and 
$\overline{A_\al^\xi}\subset Q_\xi$ implies $\rho_\al^\xi\leq \rho_\xi$. 
Hence $\rho_\al^\xi(f_\al)\leq \varepsilon u$ for every  $\alpha\in\Delta$ and every $\xi\in\Xi$.
By what has been mentioned above the order projections $\rho_\al^\xi$ 
are mutually disjoint, whenever  $\al\neq \al'$ or $\xi\neq \xi'$. 
Therefore, the order projections $\pi_\al=\sup\limits_{\xi\in\Xi}\rho_\al^\xi$ and 
$\pi_{\al'}=\sup\limits_{\xi\in\Xi}\rho_{\al'}^\xi$ are mutually disjoint as well. 
We show that the supremum of all $\pi_\al$ is the identity operator. 
By assuming the contrary there is an order projection $\gamma$ 
which is disjoint to each projection $\pi_\al$ what causes its disjointness to each 
$\rho_\al^\xi$ and finally, $\gamma$ is disjoint to each $\rho_\xi$. 
This contradicts the fact that $(\rho_\xi)_{\xi\in \Xi}$ is a partition of unity. 
Thus  
$(\pi_\al)_{\al\in\Delta}$ is a partition of unity and
\[
\pi_{\alpha}\big(Tx_{\alpha}+S(x-x_{\alpha})\big)\leq \varepsilon u \; \mbox{ for every } \; \alpha\in\Delta.
\]
\end{proof}
\par
\medskip
Similar to the linear case (see \c{Ku}, sect.3.1.5) we characterize next the disjointness
of two positive abstract Uryson operators.
\begin{thm}\label{op-1}
Let $E,F$ be vector lattices, with $F$ Dedekind complete. 
Two operators $S,T\in\cU_+(E,F)$  are disjoint if and only if for arbitrary $x\in E$ and $\varepsilon>0$
there exist a partition $(\rho_\al)_{\al\in \Delta}$ of unity in $\fP(F)$ and a family 
$(x_\al)_{\al\in \Delta}\subset\cF_x$ such that the inequalities
\be\label{f41}
\rho_{\alpha}Tx_\alpha\leq\varepsilon Tx \quad\mbox{and} \quad
\rho_{\alpha}S(x-x_\al)\leq\varepsilon Sx
\ee
hold for all $\al\in \Delta$.
\end{thm}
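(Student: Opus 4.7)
The converse is short: by Theorem~\ref{f}(2), $(T\wedge S)(x)\le Ty_\alpha+S(x-y_\alpha)$ for every $\alpha$, so applying $\rho_\alpha$ and the two hypothesised inequalities gives $\rho_\alpha(T\wedge S)(x)\le\varepsilon(Tx+Sx)$; summing over the partition of unity and then letting $\varepsilon\to 0$ forces $T\wedge S=0$.

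For the direct direction the idea is to decompose $F$ into four pairwise disjoint bands using only the order data of $Tx$ and $Sx$, so that three of them are trivial and the remaining ``core'' carries $Tx\wedge Sx$ as a weak order unit and Theorem~\ref{op-10} becomes directly applicable. Fix $x\in E$ and $\varepsilon>0$, set $v=Tx+Sx$, $w=Tx\wedge Sx$, and abbreviate $\rho_T=\rho_{Tx}$, $\rho_S=\rho_{Sx}$. By the identities recalled in the excerpt (together with the elementary $\rho_T\vee\rho_S=\rho_v$ valid for positive elements) one has $\rho_T\wedge\rho_S=\rho_w$, giving the partition of unity
\[
I_F=\rho_v^\perp+(\rho_T\wedge\rho_S^\perp)+(\rho_T^\perp\wedge\rho_S)+\rho_w
\]
in $\fP(F)$. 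On $\rho_v^\perp$ both $Tx$ and $Sx$ are annihilated, so any fragment (say $x_\alpha=0$) makes both inequalities trivial; on $\rho_T\wedge\rho_S^\perp$ the identity $\rho_S^\perp Sx=0$ makes the choice $x_\alpha=0$ work; symmetrically, on $\rho_T^\perp\wedge\rho_S$ the fragment $x_\alpha=x$ does the job.

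The only nontrivial summand is the core band $\rho_w F=\{w\}^{\perp\perp}$, on which $w$ is a weak order unit. The restricted operators $\rho_w T,\rho_w S\in\cU_+(E,\rho_w F)$ remain disjoint: order continuity of the band projection $\rho_w$ combined with Theorem~\ref{f}(2) yields $[(\rho_w T)\wedge(\rho_w S)](y)=\rho_w(T\wedge S)(y)=0$. Applying Theorem~\ref{op-10} inside $\rho_w F$ with $u=w$ then produces a partition $(\pi_\alpha)_{\alpha\in\Delta}$ of $\rho_w$ in $\fP(\rho_w F)\subset\fP(F)$ together with fragments $(y_\alpha)\subset\cF_x$ such that $\pi_\alpha(Ty_\alpha+S(x-y_\alpha))\le\varepsilon w$. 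Since $w\le Tx$ and $w\le Sx$, this single symmetric bound simultaneously yields $\pi_\alpha Ty_\alpha\le\varepsilon Tx$ and $\pi_\alpha S(x-y_\alpha)\le\varepsilon Sx$; joining these $\pi_\alpha$'s with the three trivial projections above completes the required partition.

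The main obstacle is isolating the correct reduction: it is precisely the passage to the band $\{Tx\wedge Sx\}^{\perp\perp}$ that converts the symmetric inequality of Theorem~\ref{op-10} into the two asymmetric inequalities demanded by the statement, because only there are both $w\le Tx$ and $w\le Sx$ available. Verifying that the restricted operators $\rho_w T$ and $\rho_w S$ stay disjoint after the restriction is routine but must be handled via the order continuity of the band projection $\rho_w$ together with the Riesz--Kantorovich formula.
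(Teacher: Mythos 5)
Your argument is correct; the converse direction coincides with the paper's, but your forward direction takes a genuinely different route. The paper builds the single element $u=Sx\wedge Tx+\rho_{Sx\wedge Tx}^{\bot}(Sx+Tx)$, shows it is a weak order unit in $\{Sx+Tx\}^{\bot\bot}$ satisfying $\rho_{Tx}u\leq Tx$ and $\rho_{Sx}u\leq Sx$ (via two auxiliary projection identities), and then applies Theorem~\ref{op-10} once to all of $F$; as the footnote there concedes, this forces one to adjoin a weak order unit of $\{Sx+Tx\}^{\bot}$, which a general Dedekind complete $F$ need not possess. Your four-band splitting $I_F=\rho_{Tx+Sx}^{\bot}+(\rho_{Tx}\wedge\rho_{Sx}^{\bot})+(\rho_{Tx}^{\bot}\wedge\rho_{Sx})+\rho_{Tx\wedge Sx}$ avoids that issue entirely: the three outer summands are discharged by the fragments $0$, $0$, $x$ respectively, and Theorem~\ref{op-10} is invoked only inside the Dedekind complete band $\{Tx\wedge Sx\}^{\bot\bot}$, where $w=Tx\wedge Sx$ is itself the required weak order unit and the bounds $w\leq Tx$, $w\leq Sx$ convert the symmetric estimate $\pi_{\al}\big(Ty_\al+S(x-y_\al)\big)\leq\varepsilon w$ into the two asymmetric inequalities of (\ref{f41}) for free. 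This is cleaner and marginally more robust than the published proof. Two details deserve an explicit line: first, the disjointness of $\rho_wT$ and $\rho_wS$ rests on the fact that a band projection preserves \emph{arbitrary} existing infima --- the set $\{Ty_1+Sy_2\colon x=y_1\sqcup y_2\}$ in Theorem~\ref{f}(2) is not obviously downward directed, so bare order continuity of $\rho_w$ does not quite suffice, although the stronger preservation property is elementary (if $b\leq\rho_w y$ for all such $y$, then $\rho_w^{\bot}b\leq 0$ and $\rho_w b+\rho_w^{\bot}\big((T\wedge S)x\big)\leq y$, whence $b\leq\rho_w\big((T\wedge S)x\big)$); second, the degenerate case $Tx\wedge Sx=0$ should be mentioned, where $\rho_w=0$ and the three trivial projections already exhaust $I_F$.
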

\begin{proof}
Let  $S\wedge T=0$. For an arbitrary $x\in E$ consider in $F$ the element
         $u=Sx\wedge Tx+\rho_{Sx\wedge Tx}^{\bot}(Sx+Tx)$.
We may write
\begin{eqnarray*}  
    Sx+Tx & = & Sx\wedge Tx+Sx\vee Tx,\\
Sx\vee Tx & = &v_{1}+v_{2},
\end{eqnarray*} 
for $v_{1}=\rho_{Sx\wedge Tx}(Sx\vee Tx)$, $v_{2}=\rho_{Sx\wedge Tx}^{\bot}(Sx\vee Tx)$ and $v_{1}\bot v_{2}$. 
Notice that $v_{1}\in(Sx\wedge Tx)^{\bot\bot}$. 
If $w\in F$ is an element such that $w\bot u$ then 
\[ 
w\bot (Sx\wedge Tx)\;\mbox{ and } \;w\bot \rho_{Sx\wedge Tx}^{\bot}(Sx+Tx).
\]
The first relation implies $w\bot(Sx\wedge Tx)^{\bot\bot}$ and therefore, $w\bot v_1$.  
Due to $Sx\vee Tx=\rho_{Sx\wedge Tx}(Sx\vee Tx)+\rho_{Sx\wedge Tx}^\bot(Sx\vee Tx)$ the second relation 
implies $w\bot \rho_{Sx\wedge Tx}^\bot(Sx\vee Tx)$, i.e. $w\bot v_2$. 
So we have $w\bot(v_1+v_2)$ what together with $w\bot (Sx\wedge Tx)^{\bot\bot}$ implies  $w\bot (Sx+ Tx)$.
Hence it is shown that $Sx+Tx\in\{u\}^{\bot\bot}$ and $u$ is a weak order unit in $\{Sx+Tx\}^{\bot\bot}$. 
We claim that
\be\label{f41a}
\rho_{Sx}u\leq Sx\; \mbox{ and }\; \rho_{Tx}u\leq Tx.
\ee
For that we establish first the two auxiliary relations   
\begin{equation*}
(a) \;\,\rho_{Sx}\circ\rho_{Sx\wedge Tx}^{\bot}Tx=0 \quad\mbox{and} \quad 
(b) \;\,\rho_{Sx}\circ\rho_{Sx\wedge Tx}^{\bot}(Sx+Tx)=\rho_{Sx\wedge Tx}^{\bot}Sx.
\end{equation*}
For $(a)$ consider  
\begin{eqnarray*}
\rho_{Sx}\circ\rho_{Sx\wedge Tx}^{\bot}Tx & = & \rho_{Sx\wedge Tx}^{\bot}\circ\rho_{Sx}Tx \\
              & = & \rho_{Sx\wedge Tx}^{\bot}\circ\rho_{Sx}\circ\rho_{Tx}Tx  
 = \rho_{Sx\wedge Tx}^{\bot}\circ\rho_{Sx\wedge Tx}Tx=0.
\end{eqnarray*}
Then $(b)$ follows immediately as  
\begin{eqnarray*}
\lefteqn{\rho_{Sx}\circ\rho_{Sx\wedge Tx}^{\bot}(Sx+Tx)} \\   
& = &  \rho_{Sx}\circ\rho_{Sx\wedge Tx}^{\bot}Sx+\rho_{Sx}\circ\rho_{Sx\wedge Tx}^{\bot}Tx  \\ 
& = &  \rho_{Sx\wedge Tx}^{\bot}\circ\rho_{Sx}Sx+\rho_{Sx}\circ\rho_{Sx\wedge Tx}^{\bot}Tx  =  
       \rho_{Sx\wedge Tx}^{\bot}Sx+\rho_{Sx}\circ\rho_{Sx\wedge Tx}^{\bot}Tx  \\ 
& = &  \rho_{Sx\wedge Tx}^{\bot}Sx.
\end{eqnarray*}
Now (\ref{f41a}) can be shown as follows:
\begin{eqnarray*}
 \rho_{Sx}u &  =  & \rho_{Sx}\big(Sx\wedge Tx+\rho_{Sx\wedge Tx}^{\bot}(Sx+Tx)\big)             \\
            &  =  & \rho_{Sx}(Sx\wedge Tx)+\rho_{Sx}\circ\rho_{Sx\wedge Tx}^{\bot}(Sx+Tx) \\
            &  =  & \rho_{Sx\wedge Tx}(Sx\wedge Tx)+\rho_{Sx\wedge Tx}^{\bot}Sx\\
            &\leq & \rho_{Sx\wedge Tx}Sx+\rho_{Sx\wedge Tx}^{\bot}Sx=Sx.
\end{eqnarray*}
The same argument is valid for $\rho_{Tx}u$.
By the disjointness of $T$ and $S$ and in view of Theorem\footnote{\,A weak order unit we need for applying this 
theorem is $u+v$, where $u$, as was already mentioned, is a weak order unit in $\{Sx+Tx\}^{\bot\bot}$ 
and $v$ is some weak order unit in $\{Sx+Tx\}^\bot$.}\,\ref{op-10}, for any $\varepsilon >0$,
there is a partition of unity $(\rho_\al)_{\al\in \Delta}$ in $\fP(F)$ and a
family of fragments  $(x_\al)_{\al\in \Delta}$ of the element  $x$, such that
\[ \rho_\al(Tx_{\alpha}+S(x-x_\al))\leq\varepsilon u\; \mbox{ for all } \al\in \Delta.
\]
Consequently, for any $\al\in \Delta$ one has
\begin{eqnarray*}
    \rho_\al Tx_\al & = & \rho_{Tx}\circ\rho_\al Tx_\al\leq\rho_{Tx}\varepsilon u \leq\varepsilon Tx\quad \mbox{ and }\\
\rho_\al S(x-x_\al) & = & \rho_{Sx}\circ\rho_\al S(x-x_\al)\leq\rho_{Sx}\varepsilon u \leq\varepsilon Sx.
\end{eqnarray*}
Let us prove the converse assertion. Take again an arbitrary element $x\in E$.
According to Theorem \ref{f}(2) we must prove that
\[ 
    (T\wedge S)x=\inf\{Ty+Sz\colon x=y\sqcup z \}=0.
\]
By the assumption, for every  $\varepsilon>0$  there is a partition of unity $(\rho_\al)_{\al\in \Delta}$ in $\fP(F)$ 
and a family $(x_\al)_{\al\in \Delta}\subset\cF_x$ with the properties (\ref{f41}).
So we have
\begin{eqnarray*}
 (T\wedge S)x &   =  & \inf\limits_{y\in \cF_x}\{Ty+Sz\colon x=y\sqcup (x-y)\} \\
              & \leq & \inf\limits_{\al\in \Delta}\{Tx_\al+S(x-x_\al)\} \\
              &  =   & \sup\limits_{\al\in \Delta}\rho_\al\big(\inf\limits_{\al\in\Delta}\{Tx_{\alpha}+S(x-x_{\alpha})\}\big) \\ 
              &  =   & \sup\limits_{\al\in \Delta}\rho_\al\big(Tx_\al+S(x-x_\al)\big) \leq\varepsilon(Tx+Sx) .
\end{eqnarray*}%
Hence $(T\wedge S)x=0$.
\end{proof}
For each set $A\subset\mathcal{U}(E,F)$ we denote by $\pi_{A}$ the projector in $\cU(E,F)$
onto the band $A^{\bot\bot}$ and put $\pi_A^\perp=(\pi_A)^{\bot}$ (the projection onto $A^\bot$).
\par
A set $A\subset\cU_{+}(E,F)$  is called { \it increasing} or {\it  upwards directed} if for arbitrary
$S,T\in A$  there exists a $V\in A$ such that $S,T\leq V$. The next formulas show, in particular,
how to calculate the projection onto a band which is generated by an increasing set.
\begin{thm}\label{op-2}
Let $E,F$ be  vector lattices with $F$ Dedekind complete, $A\subset\cU_{+}(E,F)$ be an increasing set.
Then the following equations hold for
 arbitrary $T\in\mathcal{U}_{+}(E,F)$ and  $x\in E$ 
 \begin{eqnarray}
(\pi_A T)(x) & = &
  \sup\limits_{\varepsilon>0\atop S\in A}\,\inf\limits_{y\in\cF_x \atop\rho\in\fP(F)}\{\rho Ty+\rho^{\bot}Tx\colon\rho S(x-y)
                 \leq  \varepsilon Sx\}, \label{f43} \\
(\pi_A^\perp T)(x) & = & 
  \inf\limits_{\varepsilon>0\atop S\in A}\, \sup\limits_{y\in\cF_x\atop\rho\in\fP(F)}\{\rho Ty\colon \rho Sy
                  \leq\varepsilon Sx\}. \label{f44}
\end{eqnarray}
\end{thm}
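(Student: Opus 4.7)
The plan is to establish formula \eqref{f44} first and then deduce \eqref{f43} from it. The key structural fact used throughout is the decomposition $T = \pi_A T + \pi_A^\perp T$ into positive abstract Uryson operators, together with $\pi_A^\perp T \in A^\perp$, so $\pi_A^\perp T$ is disjoint from every $S \in A$.

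Abbreviating the inner supremum in \eqref{f44} by $M_{\varepsilon,S}(x) := \sup\{\rho Ty \colon y\in\cF_x,\, \rho\in\fP(F),\, \rho Sy \leq \varepsilon Sx\}$, I first show $\pi_A^\perp T(x) \leq M_{\varepsilon,S}(x)$ for every admissible $\varepsilon,S$ by applying Theorem~\ref{op-1} to the disjoint pair $(\pi_A^\perp T, S)$. This produces a partition of unity $(\rho_\al)_{\al\in\Delta}\subset\fP(F)$ and fragments $(x_\al)_{\al\in\Delta}\subset\cF_x$ with $\rho_\al S x_\al \leq \varepsilon Sx$ and $\rho_\al\pi_A^\perp T(x-x_\al) \leq \varepsilon \pi_A^\perp T x$, so each $(x_\al,\rho_\al)$ is feasible for $M_{\varepsilon,S}$. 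Combining $T \geq \pi_A^\perp T$ with orthogonal additivity $\pi_A^\perp Tx = \pi_A^\perp Tx_\al + \pi_A^\perp T(x-x_\al)$ yields $\rho_\al Tx_\al \geq \rho_\al \pi_A^\perp T x - \varepsilon \pi_A^\perp T x$, and taking the supremum over $\al$ with $\sup_\al \rho_\al = I_F$ gives $M_{\varepsilon,S}(x) \geq (1-\varepsilon)\pi_A^\perp T x$. The Archimedean property of $F$ then upgrades this to $\inf_{\varepsilon,S} M_{\varepsilon,S}(x) \geq \pi_A^\perp T x$.

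For the reverse inequality, I split any feasible $\rho Ty = \rho \pi_A Ty + \rho \pi_A^\perp Ty$: the second summand is bounded by $\pi_A^\perp T x$ since $y\sqsubseteq x$ and $\pi_A^\perp T$ is positive. To make the first summand small I invoke $\pi_A T \in A^{\perp\perp}$ together with the upwards directedness of $A$, which give the band-theoretic approximation $\pi_A T x = \sup\{(T\wedge mS')(x)\colon m\in\N,\, S'\in A\}$; choosing $m_0$ and $S'_0\in A$ so that $(T - T\wedge m_0 S'_0)(x)$ approximates $\pi_A^\perp T x$ from above, picking $S\in A$ with $S\geq S'_0$, and then $\varepsilon$ small enough that $m_0\varepsilon Sx$ is negligible, the pointwise bound $(T\wedge m_0 S'_0)(y)\leq m_0 S'_0 y \leq m_0 Sy$ together with the monotonicity of the positive Uryson operator $T - T\wedge m_0 S'_0$ on fragments gives $\rho Ty\leq \pi_A^\perp Tx + \text{(vanishing)}$, hence $\inf_{\varepsilon,S} M_{\varepsilon,S}(x)\leq \pi_A^\perp T x$.

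Formula \eqref{f43} is finally deduced from \eqref{f44} using $\pi_A T = T - \pi_A^\perp T$ and the identity $Tx - \sup_i v_i = \inf_i(Tx - v_i)$ valid in Dedekind complete $F$: orthogonal additivity $Tx = Ty + T(x-y)$ together with $I_F = \rho + \rho^\perp$ rewrites $Tx - \rho Ty$ as $\rho^\perp Tx + \rho T(x-y)$, and the substitution $y \mapsto x-y \in \cF_x$ converts the constraint $\rho Sy\leq \varepsilon Sx$ into $\rho S(x-y)\leq \varepsilon Sx$, which gives \eqref{f43} exactly. The main obstacle I expect is the reverse inequality in \eqref{f44}: justifying and applying the band approximation $\pi_A Tx = \sup_{m,S'}(T\wedge mS')(x)$ in the vector lattice $\cU(E,F)$ and arranging a coordinated choice of $m_0$, $S$, and $\varepsilon$ so that all error terms disappear in the outer infimum is where the upwards directedness of $A$ plays an essential role.
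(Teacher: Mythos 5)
Your proposal is correct and follows essentially the same route as the paper's proof: one inequality comes from Theorem~\ref{op-1} applied to the disjoint pair $(\pi_A^{\perp}T,S)$ together with a partition of unity, the other from approximating $\pi_A T$ from below by positive operators dominated by multiples $nS_0$ of elements of $A$ (the paper uses a net from the order dense ideal generated by $A$, you use $T\wedge m_0S_0'$ --- the same device), and \eqref{f43} is obtained from \eqref{f44} via the identity $Tx-\rho Ty=\rho^{\bot}Tx+\rho T(x-y)$ and the substitution $y\mapsto x-y$, exactly as in the paper. The one step to spell out is the passage from $M_{\varepsilon,S}(x)\geq(1-\varepsilon)\pi_A^{\perp}Tx$ to $\inf_{\varepsilon,S}M_{\varepsilon,S}(x)\geq\pi_A^{\perp}Tx$: the Archimedean property alone does not suffice (the naive infimum over $\varepsilon$ of the right-hand side is $0$), and you also need that $M_{\varepsilon,S}(x)$ increases with $\varepsilon$ because the feasible set grows --- the same monotonicity observation the paper makes explicitly at the end of its proof.
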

\begin{proof}
Both formulas are proved by a similar argument. So, only the second one will be proved.
Denote for some $x\in E$ the right-hand side of  (\ref{f44})  by $\vartheta(T)(x)$.
It is clear that the map $\vartheta(T):E\to F$ is order bounded and positive.
If $x=x_{1}+x_{2}$, $x_{1}\bot x_{2}$, then every fragment of  $y\in\cF_x$ has a representation
   $y=y_1 + y_2 $, where $y_i\in\cF_{x_i}$ and $i=1,2$.
Therefore $\vartheta(T)$ is an orthogonally additive operator.
For $T\in\cU_+(E,F)$ put $\kappa(T)=T-\vartheta(T)$ and prove $\kappa(T)=\pi_{A}T$. 
Since for any $y\in \cF_x$ and $\rho\in \fP(F)$ 
\[Tx=\rho Ty+\rho^\bot Ty+T(x-y)\, \mbox{ implies }\,  Tx-\rho Ty= \rho^\bot Tx+\rho T(x-y) \]
one has
\begin{eqnarray*}
\kappa(T)(x) & = & Tx-\vartheta(T)(x) \\
             & = &  \sup_{\varepsilon>0\atop S\in A}\, \inf\limits_{y\in \cF_x\atop \rho\in \fP(F)}\{Tx-\rho Ty\colon \rho Sy\leq \varepsilon Sx\} \\
             & = &  \sup_{\varepsilon>0\atop S\in A}\, \inf\limits_{y\in \cF_x\atop
            \rho\in \fP(F)}\{\rho^\bot Tx+\rho T(x-y) \colon \rho Sy\leq \varepsilon Sx\},
\end{eqnarray*}
 what may be written as
\[
\kappa(T)(x)=\sup\limits_{\varepsilon>0 \atop S\in A}\,\inf\limits_{y\in \cF_x\atop\rho\in \fP(F)}
             \{\rho^{\bot}Tx+\rho Ty\colon\rho S(x-y)\leq\varepsilon Sx\}.
\]
The order ideal, generated by $A$ is order dense in $A^{\bot\bot}$. Therefore 
a net of operators  $(T_{\gamma})_{\gamma\in\Gamma}\subset\mathcal{U}_{+}(E,F)$  exists which belongs  
to the ideal generated by $A$ such that
\[
T_{\gamma}=\sum\limits_{i=1}^{n(\gamma)}\lambda_{i}S_{i}, \mbox{ where } \,
S_{i}\in A,\,n(\gamma)\in\N,\,\gamma\in\Gamma,\,\lambda_{i}\in\R_+
\]
and $T_{\gamma}\uparrow\pi_{A}T$ (\cite{Al}, Theorems~3.3 and 3.4). 
Using the fact that $A$ is an increasing set one has
\[
(T_{\gamma})_{\gamma\in \Gamma}\subset\bigcup\limits_{S\in A \atop n\in\N}[0,nS] .
\]
Fix  $\gamma_{0}\in\Gamma$.
Then  $T_{\gamma_{0}}\leq nS_{0}$ for some  $S_0\in A$ and $n\in\N$.
For arbitrary  $\varepsilon>0$ there exist 
  $\rho\in\fP(F)$  and  $y\in\cF_x$
such\footnote{\,For example, $y=x$ and arbitrary $\rho\in \fP(F)$.} that   $\rho S_0(x-y)\leq\varepsilon S_0x$.
Hence for those $y$ and $\rho$, and due to  $T_{\gamma_0}\leq \pi_AT\leq T$, 
one gets
\begin{eqnarray*}
T_{\gamma_{0}}x & \leq & \rho T_{\gamma_{0}}(x-y)+\rho Ty+\rho^{\bot}Tx\\
                & \leq & \rho nS_0(x-y)+\rho Ty+\rho^{\bot}Tx\leq
\varepsilon nS_0x+\rho Ty+\rho^{\bot}Tx.
\end{eqnarray*}
So by passing first to the infimum (with respect to $y\in \cF_x$ and $\rho\in \fP(F)$) 
and subsequently, to the supremum (with respect to $\varepsilon>0$ and  $S\in A$) 
on the right-hand side\footnote{\,in both cases without touching 
the term $\varepsilon nS_0x$.}  of the last inequalities one obtains 
\begin{eqnarray*}
T_{\gamma_{0}}x & \leq & \varepsilon nS_0x+\inf\limits_{y\in\cF_x\atop \rho\in\fP(F)}
                         \{\rho Ty+\rho^\bot Tx\colon \rho S_0(x-y)
                         \leq\varepsilon S_0x\}\\
                & \leq & \varepsilon nS_0x+\sup\limits_{\varepsilon>0\atop S\in A}
                         \inf\limits_{y\in\cF_x\atop \rho\in\fP(F)}
                         \{\rho Ty+\rho^\bot Tx\colon \rho S(x-y)
                         \leq\varepsilon Sx\}\\         
                & \leq & \varepsilon nS_0x+\kappa(T)(x).
\end{eqnarray*}
Since $\varepsilon$ is arbitrary thus $T_{\gamma_{0}}x\leq\kappa(T)(x)$ is proved.
Consequently  $\sup_{\gamma\in\Gamma}T_\gamma x\leq\kappa(T)(x)$
and
\[
(\pi_{A}T)x = \sup\limits_{\gamma\in \Gamma}T_{\gamma}x\leq\kappa(T)(x)\;\mbox{ implies }\;
              \vartheta(T)(x)\leq(\pi_{A}^\bot T)x.
\]
Since  $x\in E$ is arbitrary one has  $\vartheta(T)\leq\pi_{A}^\perp T$.
\par
The converse inequality is proved as follows.
For arbitrary  $T\in\mathcal{U}_{+}(E,F)$ the following holds
\[
\vartheta(\pi_A^\bot T)\leq\vartheta(T) =  \vartheta(\pi_{A}^\bot T)+\vartheta(\pi_{A}T).
\]
On  the  other  hand,  by  what  has been  proved, one has
\[
\vartheta(\pi_{A}T)\leq\pi_{A}^\bot\pi_{A}T=0.
\]
Therefore  $\vartheta(T)=\vartheta(\pi_{A}^\bot T)$ and 
$\kappa T=T-\vartheta T=\pi_AT+\pi^\bot_AT-\vartheta(\pi^\bot_AT).
$
Followingly, in order to conclude $\kappa(T)=\pi_AT$ it remains to show that
$\vartheta(\pi_{A}^\bot T)=\pi_{A}^\bot T$. 
So, let be $C=\pi_{A}^\bot T$ and $S\in A$. 
Then $C\geq 0$ and $C\wedge S=0$. If one shows $\kappa(C)=0$,  
then $\kappa(\pi_A^\bot T)=\pi^\bot_AT-\vartheta(\pi^\bot_AT)$ implies $\pi^\bot_AT=\vartheta(\pi^\bot_AT)$. 
\par
By Theorem~\ref{op-1} for any $\varepsilon>0$ and $x\in E$ there exist a partition of unity  
$(\rho_{\alpha})_{\alpha\in\Delta}$ in $\fP(F)$ and a family    
$(x_\al)_{\al\in\Delta}\subset \cF_x$ such that
\[
\rho_\al Cx_\al    \leq\varepsilon Cx \, \mbox{  and }\,
\rho_\al S(x-x_\al)\leq\varepsilon Sx.
\]
Then\footnote{\,by using the relations $\rho\circ\rho=\rho$ and $\rho\circ\rho^\bot=0$.} 
one has
\begin{eqnarray*}
\varepsilon Cx & \geq & \rho_\al Cx_\al=\rho_\al(\rho_\al Cx_\al+\rho_\al^{\bot}Cx)\\
               & \geq & \rho_\al\inf\limits_{y\in\cF_{x}\atop \rho\in\fP(F)}
               \{\rho Cy+\rho^{\bot}Cx\colon \rho S(x-y)\leq\varepsilon Sx\}, 
\end{eqnarray*}
what implies   
\begin{eqnarray*}
\varepsilon Cx & \geq & \sup\limits_{\al\in \Delta}\rho_\al \inf\limits_{y\in\cF_{x}\atop \rho\in\fP(F)}
                        \{\rho Cy+\rho^{\bot}Cx\colon \rho S(x-y)\leq\varepsilon Sx\} \\
               &   =  & \inf\limits_{y\in\cF_{x}\atop \rho\in\fP(F)}
               \{\rho Cy+\rho^{\bot}Cx\colon \rho S(x-y)\leq\varepsilon Sx\}.
\end{eqnarray*}
Observe that the left-hand side of the inequality does not depend on $S$ and observe that the expression on the right-haand side  
increases if $\varepsilon$  decreases. For fixed $\varepsilon_0>0$ one has then 
\[   \varepsilon_0 Cx  \geq \inf\limits_{y\in\cF_{x}\atop \rho\in\fP(F)}
                            \{\rho Cy+\rho^{\bot}Cx\colon \rho S(x-y)\leq\varepsilon_0 Sx\}
\]
and for  $0<\varepsilon'<\varepsilon_0$ 
\begin{eqnarray*}
\varepsilon_0Cx \geq \varepsilon'Cx &\geq & \inf\limits_{y\in\cF_{x}\atop \rho\in\fP(F)}
                                            \{\rho Cy+\rho^{\bot}Cx\colon \rho S(x-y)\leq\varepsilon' Sx\} \\
                                    &\geq & \inf\limits_{y\in\cF_{x}\atop \rho\in\fP(F)}
                            \{\rho Cy+\rho^{\bot}Cx\colon \rho S(x-y)\leq\varepsilon_0 Sx\}.
\end{eqnarray*}
It follows 
\[ \varepsilon_0Cx \geq \sup\limits_{\varepsilon'>0\atop S\in A}\inf\limits_{y\in\cF_{x}\atop \rho\in\fP(F)}
                                            \{\rho Cy+\rho^{\bot}Cx\colon \rho S(x-y)\leq\varepsilon' Sx\} 
\] 
and so, $\varepsilon_0 Cx\geq\kappa(C)$ for 
arbitrary $\varepsilon_0>0$. This means $\kappa(C)=0$.
\end{proof}
For the projections onto the principal bands in $\mathcal{U}_{+}(E,F)$ the following formulas are 
obtained as special cases from (\ref{f43}) and (\ref{f44}).  
\begin{cor}\label{c1}
Let  $E,F$ be the same as in the Theorem~\ref{op-2}.
Then for arbitrary $S,T\in\mathcal{U}_{+}(E,F)$ and  $x\in E$ the following formulas\footnote{\,For 
$S\in\mathcal{U}_{+}(E,F)$ the projections onto the bands $\{S\}^{\bot\bot}$ and $\{S\}^\bot$ are  
denoted by $\pi_S$ and $\pi_S^\bot$, respectively.}  hold
\begin{eqnarray*}
    (\pi_{S}T)x & = & \sup\limits_{\varepsilon>0}\inf_{y\in\cF_x \atop\rho\in\fP(F)}\{\rho Ty+
                      \rho^{\bot}Tx \colon \rho S(x-y)\leq\varepsilon Sx\} \\
(\pi_S^\bot T)x & = & \inf\limits_{\varepsilon>0}\sup_{y\in\cF_x \atop\rho\in\fP(F)}\{\rho Ty\colon 
                      \rho Sy\leq\varepsilon Sx\}.
\end{eqnarray*}
\end{cor}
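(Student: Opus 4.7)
The plan is to derive this corollary as a direct specialization of Theorem~\ref{op-2} applied to the singleton set $A=\{S\}$. Since no genuinely new analysis is required, the task reduces to checking that the hypotheses of that theorem are satisfied and that the formulas collapse correctly.

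First, I would verify that a one-element subset $\{S\}\subset\cU_+(E,F)$ is trivially upwards directed: for any $T_1,T_2\in\{S\}$ one has $T_1=T_2=S$, so $V=S$ serves as a common upper bound in $\{S\}$. The band generated by this singleton is precisely the principal band $\{S\}^{\bot\bot}$, so under the notational convention $\pi_A=\pi_{A^{\bot\bot}}$ introduced before Theorem~\ref{op-2}, we have $\pi_{\{S\}}=\pi_S$ and $\pi_{\{S\}}^\bot=\pi_S^\bot$ exactly as in the corollary's notation.

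Next I would substitute $A=\{S\}$ into formulas (\ref{f43}) and (\ref{f44}). The outer supremum (respectively infimum) taken over pairs $(\varepsilon,S')$ with $\varepsilon>0$ and $S'\in A$ collapses to one over $\varepsilon>0$ alone, since $S'$ is forced to equal $S$. The inner conditions $\rho S'(x-y)\leq\varepsilon S'x$ in (\ref{f43}) and $\rho S' y\leq\varepsilon S' x$ in (\ref{f44}) become $\rho S(x-y)\leq\varepsilon Sx$ and $\rho Sy\leq\varepsilon Sx$, respectively. This produces verbatim the two displayed formulas for $(\pi_S T)x$ and $(\pi_S^\bot T)x$.

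There is no substantive obstacle in the argument: all the analytic content, including the approximation by finite sums of operators from $A$ and the use of Theorem~\ref{op-1} via Theorem~\ref{op-10}, is already absorbed into Theorem~\ref{op-2}. The only step deserving even a moment's thought is confirming that the "increasing set" hypothesis is met vacuously by a singleton, which it is.
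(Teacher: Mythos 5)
Your proposal is correct and is exactly the argument the paper intends: the corollary is stated there as a special case of Theorem~\ref{op-2}, obtained by taking $A=\{S\}$, and your verification that a singleton is trivially upwards directed and that the outer extrema over $S'\in A$ collapse is all that is needed.
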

Another formula for $(\pi_S^\bot T)x$ is obtained as follows. 
First, notice that the equality \mbox{$\rho_{Sx}^{\bot}((\pi_S^\bot T)x)=\rho_{Sx}^{\bot}(Tx)$} holds. 
             Indeed, $\rho_{Sx}^\bot$ belongs to $\fP(F)$ and  \linebreak $\rho_{Sx}^{\bot}S(x-0)\leq \varepsilon Sx$ 
             for every $\varepsilon>0$. Therefore 
\begin{eqnarray*}
    \rho_{Sx}^{\bot}(\pi_{S}T)x &=& \sup\limits_{\varepsilon >0}\inf\limits_{y\in \fF \atop \rho\in \fP(F)}
             \{\rho^\bot_{Sx}(\rho Ty+\rho^\bot Tx)\colon \rho S(x-y)\leq \varepsilon Sx\} \\ 
\mbox{\scriptsize(for $y=0$ and  $\rho=\rho_{Sx}^\bot$)}    &\leq &  \sup\limits_{\varepsilon>0}\{\rho^\bot_{Sx}
                       \circ(\rho_{Sx}^\bot (T0)+\rho^{\bot}_{Sx}(Tx))
             \colon \rho_{Sx}^\bot S(x)\leq \varepsilon Sx\}\\
         &= & \rho_{Sx}^{\bot}\circ(\rho_{Sx}^{\bot}(T0)+
                      \rho_{Sx}^{\bot}(Tx)) 
  = \rho_{Sx}^{\bot}\circ\rho_{Sx}(Tx)=0, 
\end{eqnarray*}
i.e. $\rho_{Sx}^\bot(\pi_ST)x=0$. 
Second, for every element $\rho'(Ty)$ with $\rho'\in \fP(F)$ and $y\in\mathcal{F}_{x}$ one has 
\begin{eqnarray*}
(\rho_{Sx}\circ\rho')Ty=(\rho_{Sx}\wedge \rho')Ty =  \rho(Ty),
\end{eqnarray*}
where $\rho\in [0,\rho_{Sx}]$. Then, in particular,
\begin{eqnarray}\label{f45}
(\pi_S^\bot T)x & = & (\rho_{Sx}^{\bot}+\rho_{Sx})(\pi_{S}^\bot T)x \\
                & = & \rho_{Sx}^{\bot}(Tx)+\inf\limits_{\varepsilon>0}\,
                \sup\limits_{y\in\cF_x\atop\rho\in[0,\rho_{Sx}]}\{\rho(Ty)\colon \rho(Sy)\leq\varepsilon Sx\}
                \nonumber .
                \end{eqnarray}
\par
Let  $E,F$ be vector lattices. Fix $\varphi\in\cU(E,\Bbb{R})$ and $u\in F$.
The one-di\-men\-sional (rank-one) abstract  Uryson operator
$\varphi\otimes u\colon E\to F$ is defined as $(\varphi\otimes u)x=\varphi(x)u$.
The projections onto the band $\{\varphi\otimes u\}^{\bot\bot}$ and onto its orthogonal complement
are special cases of Corollary \ref{c1} and can be calculated as follows.
\begin{prop}\label{op-21}
Let  $E,F$ be the same as in the Theorem~\ref{op-2}.
Let  $\varphi\otimes u$ be a  positive abstract rank-one  Uryson operator,
where $\varphi\in\cU_{+}(E,\R)$, $u\in F_{+}$ and $T\in\cU_{+}(E,F)$.
Then for  $x\in E$ the following formulas hold
\begin{eqnarray}
    (\pi_{\varphi\otimes u}T)x & = & \sup\limits_{\varepsilon>0}\,\inf\limits_{y\in\cF_x}
                      \{\rho_u(Ty) \colon \varphi(x-y)\leq\varepsilon \varphi(x)\}\label{f46}\\
(\pi_{\varphi\otimes u}^\bot T)x & = & \rho_u^{\bot}(Tx)+\inf_{\varepsilon>0}\,\sup\limits_{y\in\cF_x}
                       \{\rho_u (Ty)\colon \varphi(y)\leq\varepsilon\varphi(x)\}\label{f47}.
\end{eqnarray}
\end{prop}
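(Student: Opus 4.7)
The plan is to specialize Corollary~\ref{c1} (and its companion formula~(\ref{f45})) to $S = \varphi \otimes u$ and then eliminate the projection variable. With this choice $Sx = \varphi(x)u$, $Sy = \varphi(y)u$ and $S(x-y) = \varphi(x-y)u$, so the constraints $\rho S(x-y) \leq \varepsilon Sx$ and $\rho Sy \leq \varepsilon Sx$ become $\varphi(x-y)\rho u \leq \varepsilon\varphi(x)u$ and $\varphi(y)\rho u \leq \varepsilon\varphi(x)u$, respectively.

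A preparatory observation is that $(\pi_{\varphi\otimes u}T)(x) \in \{u\}^{\bot\bot}$. Any operator in the principal ideal generated by $\varphi\otimes u$ is dominated by some $n(\varphi\otimes u)$, so it sends $x$ into $[0,n\varphi(x)u]\subset\{u\}^{\bot\bot}$; by the order-density description of the band (already used in the proof of Theorem~\ref{op-2}) and the order closedness of $\{u\}^{\bot\bot}$, the same conclusion holds for every element of $\{\varphi\otimes u\}^{\bot\bot}$. Hence $(\pi_{\varphi\otimes u}T)(x) = \rho_u\big((\pi_{\varphi\otimes u}T)(x)\big)$, and it suffices to compute $\rho_u (\pi_{\varphi\otimes u}T) x$.

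Apply the order-continuous projection $\rho_u$ to the formula of Corollary~\ref{c1} and set $\sigma = \rho\wedge\rho_u \in [0,\rho_u]$. The integrand becomes $\rho_u\rho Ty + \rho_u\rho^\bot Tx = \sigma Ty + (\rho_u-\sigma)Tx$ and the constraint reads $\varphi(x-y)\sigma u \leq \varepsilon\varphi(x)u$; both depend on $\rho$ only through $\sigma$, and the map $\rho\mapsto\rho\wedge\rho_u$ surjects $\fP(F)$ onto $[0,\rho_u]$, so the infimum reduces to one over $(y,\sigma) \in \cF_x \times [0,\rho_u]$. For fixed $y$, from $Ty \leq Tx$ the expression equals $\rho_u Tx - \sigma(Tx-Ty)$, which decreases as $\sigma$ grows. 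If $\varphi(x-y) \leq \varepsilon\varphi(x)$, every $\sigma\in[0,\rho_u]$ is admissible and the minimum $\rho_u Ty$ is attained at $\sigma=\rho_u$; if $\varphi(x-y) > \varepsilon\varphi(x)$, the constraint forces $\sigma u \leq cu$ with $c = \varepsilon\varphi(x)/\varphi(x-y) < 1$, which combined with $\sigma^\bot u \geq (1-c)u$ yields $\sigma u \perp u$ and therefore $\sigma u = 0$, forcing $\sigma = 0$ and value $\rho_u Tx$. Since $y = x$ always falls in the first case with value $\rho_u Tx$, the overall infimum is $\inf\{\rho_u Ty : \varphi(x-y) \leq \varepsilon\varphi(x)\}$, which is~(\ref{f46}).

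Formula~(\ref{f47}) is proved analogously starting from~(\ref{f45}): when $\varphi(x) > 0$ one has $\rho_{Sx} = \rho_u$, and the symmetric supremum-version of the case split shows that for $\rho\in[0,\rho_u]$ the constraint $\varphi(y)\rho u \leq \varepsilon\varphi(x)u$ forces $\rho = \rho_u$ (value $\rho_u Ty$) when $\varphi(y) \leq \varepsilon\varphi(x)$, and $\rho = 0$ (value $0$) otherwise. The degenerate case $\varphi(x) = 0$ is handled separately: orthogonal additivity and positivity of $\varphi$ give $\varphi(y) = \varphi(x-y) = 0$ for every $y\in\cF_x$, and both (\ref{f46}) and (\ref{f47}) collapse by direct inspection to the correct values $0$ and $Tx$. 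The main obstacle throughout is precisely this reduction from the general band projection $\rho\in\fP(F)$ appearing in Corollary~\ref{c1} to the single projection $\rho_u$ in~(\ref{f46})--(\ref{f47}); it rests on the rank-one structure of $\varphi\otimes u$ via the elementary fact that $\sigma u \leq cu$ with $0\leq c<1$ forces $\sigma u = 0$ for a band projection $\sigma$.
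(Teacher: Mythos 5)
Your proof is correct and follows essentially the same route as the paper: specialize the formulas of Corollary~\ref{c1} and of (\ref{f45}) to $S=\varphi\otimes u$, use $\rho_{Sx}=\rho_u$, and reduce the projection variable to $[0,\rho_u]$. You are in fact somewhat more thorough than the published argument, which proves only (\ref{f47}) (taking (\ref{f46}) as a consequence), does not discuss the admissible pairs $(y,\rho)$ with $\varphi(y)>\varepsilon\varphi(x)$ but $\rho$ small --- your observation that $\rho u\leq cu$ with $0\leq c<1$ and $\rho\leq\rho_u$ forces $\rho=0$ closes exactly this point --- and omits the degenerate case $\varphi(x)=0$.
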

\begin{proof}
It is sufficient to prove only formula (\ref{f47}).
For $\rho\in[0,\rho_{u}]$ the expres\-sion on the right side of (\ref{f45}) is
$\varphi(y)\rho u\leq\varepsilon\varphi(x)u$.
Due to 
\[  
      \sup\limits_{0\leq \rho\leq \rho_u}\phi(y)\rho u=\phi(y)\rho_u u=\phi(y)u 
\]
 the last inequality is equivalent
to $\varphi(y)\leq\varepsilon\varphi(x)$.
Observe that for $x\in E$ with $\varphi(x)> 0$ one has
$\rho_{(\phi\otimes u)x}(y)=\sup\limits_{n\in \N}\{y\wedge n\phi(x)u\}=\rho_u(y)$ for any $y\in F$, i.e.
$\rho_{(\varphi\otimes u)x}=\rho_{u}$.
Therefore,
\[\rho\left((\phi\otimes u)y\right)=\phi(y)\rho u\leq 
\varepsilon \phi(x) u=\varepsilon (\phi\otimes u)x,
  \]
i.e. (if for $y\in \cF_x$ the element $x$ is taken) $\rho ((\varphi\otimes u)x)\leq\varepsilon (\varphi\otimes u)x$.
Thus  the  supremum on the right side of the formula (\ref{f45})  is attained at $\rho=\rho_{u}$.
Now the conclusion is exactly the formula (\ref{f47})
\end{proof}
\begin{cor}
Let $E$ be a vector lattice and $T,\varphi\in\cU(E,\R)$. Then the following formula holds
\be\label{f48}
(\pi_{\varphi}T)x=\sup\limits_{\varepsilon>0}\,
           \inf\limits_{y\in\cF_x}\{Ty\colon\varphi(x-y)\leq\varepsilon \varphi(x)\}.
\ee
\end{cor}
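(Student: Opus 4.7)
The plan is to derive formula (\ref{f48}) as an immediate specialization of formula (\ref{f46}) from Proposition~\ref{op-21}, taking the codomain vector lattice $F$ to be $\R$ itself.

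First I would set $F=\R$ and $u=1\in\R_+$. Since $\{1\}^{\bot\bot}=\R$, the band projection $\rho_u=\rho_1$ is the identity operator on $\R$; in particular $\rho_u(Ty)=Ty$ for every $y\in E$. Next, the rank-one abstract Uryson operator $\varphi\otimes u=\varphi\otimes 1$ coincides with $\varphi$ itself, since $(\varphi\otimes 1)(x)=\varphi(x)\cdot 1=\varphi(x)$ for every $x\in E$. Consequently the principal bands $\{\varphi\otimes u\}^{\bot\bot}$ and $\{\varphi\}^{\bot\bot}$ are equal in $\cU(E,\R)$, so the order projections $\pi_{\varphi\otimes u}$ and $\pi_\varphi$ agree.

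Substituting these two identifications into (\ref{f46}) removes the factor $\rho_u$ in front of $Ty$ while leaving the constraint $\varphi(x-y)\leq\varepsilon\varphi(x)$ unchanged, yielding exactly the right-hand side of (\ref{f48}).

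The only subtlety is that Proposition~\ref{op-21} is stated under positivity assumptions $\varphi\in\cU_+(E,\R)$, $u\in F_+$, $T\in\cU_+(E,F)$, whereas the corollary is formulated for arbitrary $T,\varphi\in\cU(E,\R)$. The main (mild) obstacle is to extend the formula to non-positive data. For this I would use $\{\varphi\}^{\bot\bot}=\{|\varphi|\}^{\bot\bot}$, which gives $\pi_\varphi=\pi_{|\varphi|}$ and allows the positive functional $|\varphi|$ to be used in place of $\varphi$; then one decomposes $T=T^+-T^-$ via Theorem~\ref{f} and uses linearity of the band projection $\pi_\varphi$ on $\cU(E,\R)$ to recombine the two formulas. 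Most natural, however, is to read the corollary as asserting the formula in the positive setting, in which case the one-line specialization described above is the entire argument.
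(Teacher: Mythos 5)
Your proposal is exactly the paper's argument: the authors prove the corollary in one line by writing $\varphi$ as $\varphi\otimes 1$ and specializing formula (\ref{f46}), which is precisely your identification $F=\R$, $u=1$, $\rho_u=I_\R$. Your closing caveat about positivity is well taken --- the paper states the corollary for arbitrary $T,\varphi\in\cU(E,\R)$ but silently invokes Proposition~\ref{op-21}, which requires positivity, and your suggested recombination via $T=T^+-T^-$ would not go through literally (the $\sup$--$\inf$ expression is not additive in $T$), so your fallback reading of the statement in the positive setting is the one consistent with the paper's own proof.
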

This immediately follows  from (\ref{f46}) if $\phi$ is written as $\phi\otimes 1$.
\vspace{2cm}

\end{document}